\def\pref#1{(\ref{#1})}
\def\p@enumii{}
\newtheorem{thm}{Theorem}[section]
\newtheorem{lem}[thm]{Lemma}
\newtheorem{prop}[thm]{Proposition}
\newtheorem{cor}[thm]{Corollary}
\theoremstyle{definition}
\theoremstyle{remark}
\newtheorem{rem}[thm]{Remark}
\numberwithin{equation}{section}
\newcommand{\z}{\mathbb{Z}}
\newcommand{\grt}{\mathrm{girth}}
\newcommand{\V}{\mathrm{V}}
\newcommand{\m}{\mathrm{m}}
\newcommand{\im}{\mathrm{im}}
\newcommand{\E}{\mathrm{E}}
\newcommand{\N}{\mathrm{N}}
\renewcommand{\L}{\mathrm{L}}
\renewcommand{\b}[1]{\overline{#1}}
\newcommand{\rg}{\rangle}
\renewcommand{\lg}{\langle}
\newcommand{\se}{\subseteq}
\newcommand{\link}{\mathrm{link}}
\newcommand{\sm}{\setminus }
\newcommand{\give}{$\Rightarrow$}
\newcommand{\ifof}{if and only if }
\newcommand{\tohi}{\emptyset}
\newcommand{\tl}[1]{\widetilde{#1}}
\newcommand{\rnd}{\partial}
\newcommand{\blt}{\bullet}
\newcommand{\chr}{\mathrm{char}}
\title{Gorenstein and Cohen-Macaulay Matching Complexes}
\author{Ashkan Nikseresht \\
\it\small Department of Mathematics, College of Science, Shiraz University, \\
\it\small 71457-13565, Shiraz, Iran\\
\it\small E-mail: ashkan\_nikseresht@yahoo.com}
\date{}
\begin{document}
\maketitle

\begin{abstract}
Let $H$ be a simple undirected graph. The family of all matchings of $H$ forms a simplicial complex called the
matching complex of $H$. Here , we give a classification of all graphs with a Gorenstein matching complex. Also we
study when the matching complex of $H$ is Cohen-Macaulay and, in certain classes of graphs, we fully characterize
those graphs which have a Cohen-Macaulay matching complex. In particular, we characterize when the matching complex
of a graph with girth at least 5 or a complete graph is Cohen-Macaulay.
\end{abstract}
{Keywords}: Cohen-Macaulay ring; Gorenstein ring; Matching complex; Line graph; Edge ideal;  \\
{Mathematics Subject Classification (2020):} 13F55; 05E40; 05E45.


\section{Introduction}
In this paper, $K$ denotes a field and $S=K[x_1,\ldots, x_n]$. Let $G$ be a simple graph on vertex set
$\V(G)=\{v_1,\ldots, v_n\}$ and edge set $\E(G)$. Then the \emph{edge ideal} $I(G)$ of $G$ is the ideal of $S$
generated by $\{x_ix_j|v_iv_j\in \E(G)\}$. A graph $G$ is called Cohen-Macaulay (resp. Gorenstein) when $S/I(G)$ is
Cohen-Macaulay (resp. Gorenstein) for every field $K$. Many researchers have tried to combinatorially characterize
Cohen-Macaulay (CM, for short) or Gorenstein graphs in specific classes of graphs, see for example, \cite{obs to
shell,tri-free, alpha=3, planar goren, very well, hibi, CWalker}).

Note that the family of independent sets of $G$ is a simplicial complex which we denote by $\Delta(\b G)$, and is
called the \emph{independence complex} of $G$. Suppose that $I_\Delta\se S$ is the Stanley-Reisner ideal of a
simplicial complex $\Delta$. Then $\Delta$ is called CM (resp. Gorenstein) when $S/I_\Delta$ is CM (resp. Gorenstein)
for every field $K$. Noting that $I_{\Delta(\b G)}=I(G)$, we see that $G$ is CM or Gorenstein \ifof $\Delta(\b G)$ is
so.

Assume that $H$ is a simple undirected graph and $G=\L(H)$ is the \emph{line graph} of $H$, that is, edges of $H$ are
vertices of $G$ and two vertices of $G$ are adjacent if they share a common endpoint in $H$. Line graphs are well-known
in graph theory and have many applications. In particular, there are some well-known characterizations of line graphs
(see for example \cite[Section 7.1]{west}). Also a linear time algorithm is presented by Lehot in \cite{lehot} that
given a graph $G$, checks whether $G$ is a line graph and if $G$ is a line graph, returns a graph $H$ such that
$G=\L(H)$. Note that faces of $\Delta(\b G)$ are exactly the matchings of $H$, hence $\Delta(\b G)$ is known and
studied as the \emph{matching complex} of $H$ in the literature (see, for example, \cite{Match Rev, ManifoldMatching,
jonsson} and the references therein). In the survey article \cite{Match Rev}, one can find many known properties of
matching complexes of complete and complete bipartite graphs. In particular, it is known that the matching complex of a
complete bipartite graph with part sizes $m$ and $n$  with $m\leq n$ is CM \ifof $n\geq 2m-1$.

Here, first in Section 2, we characterize graphs with a Gorenstein matching complex. Then in Section 3, we focus on the
Cohen-Macaulay property and give classifications of graphs with a CM matching complex which are in one of the following
classes: Cameron-Walker graphs, graphs with girth at least 5, complete graphs. Note that, in other words, we present
characterizations of Gorenstein line graphs and CM line graphs in the aforementioned classes.

For definitions and basic properties of simplicial complexes and graphs one can see \cite{hibi} and \cite{west},
respectively. In particular, all notations used in the sequel whose definitions are not stated, are as in these
references.

\section{Gorenstein Matching Complexes}

In what follows, $H$ is a simple undirected graph with at least one edge, $G=\L(H)$ is the line graph of $H$ and
$\Delta=\Delta(\b G)$ is the matching complex of $H$. First we characterize when this complex is Gorenstein. It should
be mentioned that this characterization also follows from Theorem 4.3 of the recent paper \cite{ManifoldMatching} and
Theorem 5.1 in chapter II of \cite{stanley}, though, the proof presented here is quite different and does not need
homological concepts. Here if $F\se \V(G)=\E(H)$, then by $\N[F]$ (or $N_G[F]$) we mean $F\cup \{v\in \V(G)| uv\in
\E(G) \text{ for some } u\in F\}$, and we set $G_F=G\sm \N[F]$ and $H_F=H\sm \V(F)$, where $\V(F)$ is the set of all
vertices of $H$ incident to an edge in $F$. It is routine to check that $G_F=\L(H_F)$.
\begin{thm}\label{inde gor}
Suppose that $H$ is a graph with at least one edge, $G=\L(H)$ and $K$ be an arbitrary field. Then $G$ (or equivalently,
the matching complex of $H$) is Gorenstein over $K$ \ifof each connected component of $H$ is either a 5-cycle or a path
with length $\leq 2$ or the graph in Figure \ref{fig4}.
\end{thm}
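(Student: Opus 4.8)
The plan is to reduce to connected graphs and then classify. Since $\Delta(\overline{G})$ is Gorenstein if and only if it is Gorenstein over $K$ for the given field and, by the structure of matching complexes, $H$ decomposes as a disjoint union of its connected components $H_1,\dots,H_r$, the line graph $G$ is the disjoint union $\L(H_1)\sqcup\cdots\sqcup\L(H_r)$ and hence $\Delta$ is the join $\Delta_1 * \cdots * \Delta_r$ of the matching complexes of the components. A join of simplicial complexes is Gorenstein over $K$ if and only if each factor is Gorenstein over $K$ (this follows from the Künneth-type behaviour of local cohomology, or from the fact that the Stanley–Reisner ring of a join is a tensor product). So I would first record this reduction and from then on assume $H$ is connected.

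Next I would use the Stanley–Reisner characterization of the Gorenstein property: $\Delta$ is Gorenstein over $K$ if and only if $\Delta$ is Cohen–Macaulay over $K$, the core of $\Delta$ equals $\Delta$ (no cone points), and for every face $F$ the reduced cohomology $\tilde H_{i}(\link_\Delta F; K)$ vanishes except in the top degree, where it is $K$ — equivalently, $\Delta$ is a Cohen–Macaulay complex that is also ``Eulerian'' / an $\z$-homology sphere of its dimension. Using the translation $\link_\Delta F$ corresponds to the matching complex of $H_F = H\setminus \V(F)$, and the neighbourhood operation $G_F = \L(H_F)$ already set up in the excerpt, this reduces the verification to a local condition on $H$: for each matching $F$ of $H$, the matching complex of $H\setminus\V(F)$ must be a homology sphere of the appropriate dimension. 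The forward direction then amounts to showing these homology-sphere conditions are so restrictive that each connected component must be $C_5$, a path of length $\le 2$, or the one exceptional graph in Figure~\ref{fig4}; the key leverage is that if $H$ contains a vertex of degree $\ge 3$, or a sufficiently long path, or two independent edges positioned so that deleting one leaves a contractible (non-sphere) matching complex, we get a contradiction. I expect the main work here to be a careful case analysis on the maximum degree of $H$ and on short cycles, ruling out everything outside the list.

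For the converse I would simply verify directly that each graph on the list has a Gorenstein matching complex. For a path of length $\le 2$ the matching complex is a simplex or a single point pair, trivially Gorenstein. For the exceptional graph in Figure~\ref{fig4} one computes the matching complex explicitly and checks it is a homology sphere (most conveniently, identify it as the boundary of a cross-polytope or a small triangulated sphere and verify Cohen–Macaulayness plus the Euler/Dehn–Sommerville condition). The case of $C_5$ is the classical fact that the matching complex of the pentagon is itself a pentagon, i.e. $S^1$, which is Gorenstein; this is the ``magic'' example that makes the theorem non-trivial. Finally I would invoke the join reduction again to conclude that an arbitrary disjoint union of such components is Gorenstein.

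The hard part will be the forward direction: translating ``Gorenstein'' into the precise combinatorial homology-sphere condition on all vertex-deletions $H\setminus\V(F)$ and then squeezing the list down to exactly three types of components. In particular the delicate step is eliminating graphs with a vertex of degree $\ge 3$ and bounding path lengths and cycle lengths simultaneously — one must exhibit, for each forbidden configuration, a specific face $F$ whose link has the wrong reduced homology (e.g.\ is contractible, or has homology in two different degrees). Handling the interaction between Cohen–Macaulayness (which may itself already fail) and the extra Gorenstein constraint is where I would expect to spend most of the effort, and it is probably cleanest to first settle the Cohen–Macaulay reductions (which Section~3 will need anyway) and then layer the Gorenstein condition on top.
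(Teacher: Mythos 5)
There is a genuine gap: the forward direction, which carries essentially all the content of the theorem, is never actually carried out. You correctly set up the reduction to connected components via joins and the Gorenstein${}^*$/homology-sphere criterion on links, and your verification of the easy direction ($C_5$ gives $S^1$, short paths give a point or $S^0$, the exceptional graph is checked by hand) is fine in outline. But for the hard direction you only describe the shape of the argument (``a careful case analysis on the maximum degree \ldots ruling out everything outside the list'') without exhibiting the faces $F$ and the homology computations that do the ruling out. Worse, one of the stated leverage points --- that a vertex of degree $\geq 3$ yields a contradiction --- cannot be right as stated, since the exceptional graph of Figure \ref{fig4} itself contains a vertex of degree $3$; so the case analysis you defer is not only missing but would have to be organized around a different invariant.

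The paper closes exactly this gap with two concrete tools you do not have. First, it invokes a prior structural theorem (\cite[Theorem 2.3]{alpha=3}): if $G$ is Gorenstein with $\dim\Delta(\b G)\geq 1$, then for every maximum matching $M$ and $F=M\sm\{a,b\}$ the graph $G_F$ is the \emph{complement of a cycle}. This pins $H_F$ down to a 5-cycle, the Figure \ref{fig4} graph, or two disjoint paths of length $2$ after a short finite check, and a connectivity argument then forces exactly one $M$-unsaturated vertex. Second, to bound the matching number by $2$, the paper takes $F=M\sm\{a,b,c\}$, shows $H_F$ must be one specific graph, and computes its independence polynomial $I(G_F,x)=1+12x+36x^2+24x^3$, so $I(G_F,-1)=1\neq(-1)^{\dim\Delta(\b{G_F})+1}$, contradicting the Gorenstein property of links (\cite[Corollary 2.4]{Trung}). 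Your generic homology-sphere approach could in principle substitute for both steps, but as written it leaves the entire classification --- the part you yourself flag as ``the hard part'' --- unproved.
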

\begin{figure}
\begin{center}
\includegraphics[scale=2.2]{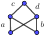}
\caption{A graph whose line graph has a Gorenstein independence complex. \label{fig4}}
\end{center}
\end{figure}
\begin{proof}
Note that $G$ is Gorenstein \ifof each connected component of $G$ is so. Since each connected component of $G$ is the
line graph of a connected component of $H$, we assume that $H$ is connected. The line graph of the stated graphs are
either $K_2$ or $K_1$ or the complement of cycles of length 5 or 6 and hence are Gorenstein by \cite[Corollary
2.4]{alpha=3}).

Conversely assume that $G$ is Gorenstein. If $\dim \Delta(\b G)=0$, then $G$ is a Gorenstein complete graph and thus
either $G$ is a vertex and hence $H$ is $K_2$ or $G$ is $K_2$ and hence $H$ is a path of length 2. Suppose $\dim
\Delta(\b G)\geq 1$. Then by \cite[Theorem 2.3]{alpha=3}, for each maximum size matching $M$ of $H$, if $F=M\sm\{a,b\}$
for some $a,b\in M$, then $G_F$ is the complement of a cycle of length at least 4. In particular, as $ab\in
\E(\b{G_F})$ there are edges $c,d\in \E(H_F)=\V(G_F)$ such that $ac, bd\in\E(G)$ but $bc,ad\notin\E(G)$. Two cases are
possible; either $c$ and $d$ have a common endpoint in $H$ or $c$ and $d$ do not have a common endpoint in $H$. In the
latter case, $c$ and $d$ are adjacent in $\b {G_F}$ and hence $\b {G_F}$ is cycle of length 4 and $H_F$ is the disjoint
union of two paths of length 2. In the former case, any other edge of $H_F$ is adjacent to both $a$ and $b$ in $G$
(since in the cycle $\b{G_F}$, degree of each vertex is 2, and $a$  and $b$ are already adjacent to two vertices among
$a,b,c,d$), therefore in $H_F$ any edge except $a,b,c,d$ must be incident to an endpoint of $a$ and also to an endpoint
of $b$. There are 4 possible such edges and hence only 16 possibilities exist for $H_F$. Because $\b{G_F}$ is also the
complement of a cycle, it follows that the only possible graphs for $H_F$ is the 5-cycle or the graph in Figure
\ref{fig4}.

Note that in both cases exactly one endpoint of each of $a$ and $b$ is adjacent to a vertex of $H$ not in $\V(M)$. It
follows that if the set of all vertices of $H$ not covered by $M$ is $\{x_1,\ldots,x_t\}$ and $M=\{u_1v_1,\ldots,
u_rv_r\}$ for $u_i,v_i\in \V(H)$, then for each $1\leq i\leq r$, exactly one of $u_i$ or $v_i$ is adjacent to exactly
one of the $x_j$'s. We can assume that $u_1,\ldots, u_{r_1}$ are the vertices adjacent to $x_1$, $u_{r_1+1},\ldots,
u_{r_1+r_2}$ are adjacent to $x_2$ and \ldots. Suppose $t>1$. Note that $H$ is connected and there is no edge between
$x_i$'s, because $M$ is a maximum matching. Thus there should be an edge with one end on some $u_iv_i$ for $i>r_1$ and
one end on $u_jv_j$ for $j\leq r_1$. But then setting $a=u_iv_i$ and $b=u_jv_j$, we see that $\b{G_F}$ is not a cycle
for $F=M\sm \{a,b\}$. Therefore, $t=1$.

Suppose that $\dim \Delta(\b G)>1$, $M$ is a maximum matching of $H$ and $a,b,c\in M$. Let $F=M\sm \{a,b,c\}$. Then for
each $l\in \{a,b,c\}$, $H_{F\cup\{l\}}$ is either a 5-cycle or the graph in Figure \ref{fig4}. It follows that, $H_F$
is one of the graphs in Figure \ref{fig5}. If $H$ is any of the first three graphs from the left in Figure \ref{fig5}
and $e$ is the edge specified in the figure, then $H_{F\cup \{e\}}$ is neither a 5 cycle nor the graph in Figure
\ref{fig4}, although, $F\cup\{e\}$ is a matching with size $\dim \Delta(\b G)-1$.

\begin{figure}
\begin{center}
\includegraphics[scale=.8]{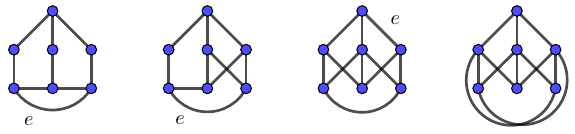}
\caption{Illustrations for the proof of Theorem \ref{inde gor}. \label{fig5}}
\end{center}
\end{figure}

Consequently, $H_F$ must be the rightmost graph depicted in Figure \ref{fig5}. Suppose that $I(G,x)$ is the
independence polynomial of $G$, that is, the coefficient of $x^n$ is the number of independent sets of $G$. Then one
can compute that $I(G_F,x)= 1+12x+36x^2+24x^3$. It follows that $I(G_F,-1)=1\neq -1=(-1)^{\dim \Delta(\b {G_F})+1}$ and
hence $G_F$ is not Gorenstein by \cite[Theorem 2.3]{alpha=3}. But according to \cite[Corollary 2.4]{Trung}, if $G$ is
Gorenstein, then $G_F$ is Gorenstein for every independent set $F$ of $G$, a contradiction. From this contradiction we
deduce that $\dim \Delta(\b G)= 1$ and as argued above, $H$ is either a 5-cycle or the graph in Figure \ref{fig4}.
\end{proof}

\section{Cohen-Macaulay Matching Complexes}

In this section we classify graphs which have a CM matching complex. We do this only when the graph is in one of the
following classes of graphs: Cameron-Walker graphs and graphs with girth at least 5 (Subsection 3.1) and complete
graphs (Subsection 3.2). Recall that here $H$ is a simple undirected graph with at least one edge, $G=\L(H)$ is the
line graph of $H$ and $\Delta=\Delta(\b G)$ is the matching complex of $H$.
\subsection{Cameron-Walker graphs and graphs with large girth}

Recall that an induced matching of $H$ is a matching $M$ of $H$ such that for every pair of edges $e\neq e'\in M$,
there is no edge $f$ of $H$ such that $f$ is incident to an endpoint of $e$ and an endpoint of $e'$. For studying
Cohen-Macaulayness of matching complexes, we first restrict ourselves to the class of graphs, such as $H$, in which the
maximum size of induced matchings (denoted by $\im(H)$) and the maximum size of matchings (denoted by $\m(H)$) are
equal. A classification of such graphs was first stated in \cite{CWalker orig}. Then in \cite{CWalker} a minor
correction to this classification was presented.

We recall the corrected classification (\cite[pp. 258, 259]{CWalker}). A \emph{star triangle} is a graph obtained by
joining several triangles at a common vertex and a \emph{pendant triangle} is a triangle with two degree 2 vertices and
a vertex with degree more than 2. Also a \emph{leaf edge} is an edge incident to a leaf. Now for a connected graph $H$,
$\im(H)=\m(H)$ \ifof either $H$ is a star or a star triangle or is obtained from a connected bipartite graph with
vertex partitions $X$ and $Y$, by adding some and at least one leaf edge to each vertex of $X$ and adding some and
maybe zero pendant triangles to each vertex  of $Y$. As in \cite{CWalker}, we call a connected graph $H$ with
$\im(H)=\m(h)$ and which is not a star or a star triangle, a \emph{Cameron-Walker graph}. By a bipartition $(X,Y)$ of a
Cameron-Walker graph, we mean a bipartition of the underlying bipartite graph where $X$ and $Y$ satisfy the
aforementioned conditions. Figure \ref{cw} shows an example of a Cameron-Walker graph.

\begin{figure}
\begin{center}
\includegraphics[scale=0.7]{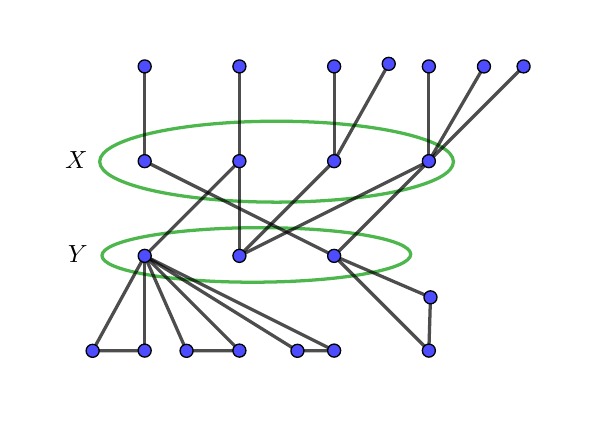}
\caption{An example of a Cameron-Walker graph \label{cw}}
\end{center}
\end{figure}

It is well-known that a CM simplicial complex is pure and the following shows that for graphs with $\im(H)=\m(h)$ the
matching complex is pure.

\begin{lem}\label{pure}
If $H$ is a star or a star triangle or a Cameron-Walker graph, then $\Delta$ is pure.
\end{lem}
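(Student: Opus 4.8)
The plan is to show that in each of the three cases the matching complex $\Delta$ is pure by proving that every maximal matching (i.e. every facet of $\Delta$) has the same cardinality, namely $\m(H)$; equivalently, that every maximal matching is in fact a maximum matching. Since $\Delta$ is a disjoint union argument does not quite apply here ($\Delta$ of a disconnected graph is a join, but all three hypotheses are stated for connected $H$), I would first reduce to the connected case and treat the three families separately.

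First I would dispose of the star and star triangle cases, which are immediate: if $H$ is a star, every nonempty matching is a single edge, so $\Delta$ has dimension $0$ and is trivially pure; if $H$ is a star triangle with center $v$, any matching can use at most one edge not incident to $v$ from each triangle and at most one edge incident to $v$, and a short case analysis shows every maximal matching has size exactly $2$ when there are at least two triangles (one edge through $v$ in one triangle, one edge avoiding $v$ in another), hence $\Delta$ is pure of dimension $1$. The main work is the Cameron-Walker case. Fix a bipartition $(X,Y)$ as in the recalled classification, so $H$ is built from a connected bipartite graph on $X\cup Y$ by attaching at least one leaf edge to each $x\in X$ and some pendant triangles to each $y\in Y$. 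I would argue that a matching $M$ is maximal if and only if it saturates every vertex of $X$ by a leaf edge and saturates every vertex of $Y$, either by an edge of a pendant triangle at $y$ or (if $y$ has no pendant triangle, or all are already "used" elsewhere) by an edge of the bipartite core. The key structural point is that the leaf edges at the $X$-vertices and the "outer" edges of the pendant triangles at the $Y$-vertices are pairwise non-adjacent, so one can always include one such edge at each vertex of $X$ and each vertex of $Y$ that carries a pendant triangle; this already gives a matching of size $|X| + (\text{number of such }Y\text{-vertices, counted once})$, and it is known (this is exactly what underlies $\im(H)=\m(H)$) that this number equals $\m(H)$.

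Concretely, the key steps are: (1) for a maximal matching $M$, show each $x\in X$ is matched — if not, since $x$ has a leaf edge $xw$ with $\deg w = 1$, that edge is available to be added to $M$, contradicting maximality; (2) show each $x\in X$ is matched \emph{via a leaf edge} — if $x$ were matched along a core edge $xy$, then $y$ is not otherwise matched, but $x$ also carries a leaf edge $xw$, and replacing $xy$ by $xw$ keeps $|M|$ the same, so after such exchanges we may assume all $X$-vertices use leaf edges, which then frees the core edges entirely; (3) with all $X$-vertices using private leaf edges, the remaining edges of $M$ lie entirely among the pendant triangles and (possibly) the core edges incident to $Y$-vertices that have no pendant triangle, and a maximal such sub-matching picks exactly one edge per pendant-triangle-bearing $Y$-vertex plus a matching saturating the pendant-triangle-free $Y$-vertices through the core; (4) conclude that $|M|$ depends only on $H$, not on $M$, hence equals $\m(H)$, giving purity. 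The step I expect to be the main obstacle is (2)–(3): carefully verifying that the exchange arguments can be carried out simultaneously without creating conflicts, and that a "maximal" matching among the pendant triangles and the reduced core is forced to have a fixed size — this is essentially re-proving, in a form convenient for counting facets, the combinatorial heart of the Cameron–Walker classification, so I would lean on \cite{CWalker} for the precise structure and only supply the matching-size bookkeeping here.
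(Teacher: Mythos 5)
Your overall strategy --- show that every maximal matching of $H$ has the same cardinality --- is exactly the paper's, but two of your counts are wrong. For a star triangle with $k\ge 3$ triangles meeting at the centre $v$, your claimed maximal matching of size $2$ (one edge through $v$ in one triangle, one outer edge in another) is not maximal: the outer edge of any third triangle can still be added. In fact every maximal matching of a star triangle has size $k$ (at most one edge meets $v$, and by maximality the outer edge of every triangle not used at $v$ is forced in), so $\Delta$ is pure of dimension $k-1$, not $1$. The same miscount recurs in the Cameron--Walker case: a vertex $y\in Y$ carrying $t_y$ pendant triangles contributes exactly $t_y$ edges to every maximal matching (all the outer edges when $y$ is matched into the core, and a maximal matching of the star triangle at $y$ otherwise), not one edge ``per pendant-triangle-bearing $Y$-vertex.'' The correct common size is $|X|+t$, where $t$ is the total number of pendant triangles. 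Your proposed characterization of maximal matchings is also false in both directions: a maximal matching may match $x\in X$ along a core edge rather than a leaf edge, and may leave a $Y$-vertex entirely unsaturated (when it has no pendant triangle and all its neighbours in $X$ are already covered).

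The exchange argument in your steps (2)--(3) has precisely the gap you flag, and it does need closing: after replacing a core edge $xy$ by a leaf edge at $x$, you must verify that the new matching is still \emph{maximal} (every neighbour of the freed vertex $y$ stays covered --- the $X$-neighbours because each carries a leaf edge and so must be matched, and the pendant-triangle vertices because the outer edges were forced into $M$ once $y$ was matched); without this, comparing $|M|$ to the size of a ``canonical'' maximal matching proves nothing, since the normalized matching could fail to be maximal and hence be smaller than the maximal ones in canonical form. The paper avoids normalization entirely by a direct count: write $M=M_1\cup M_2$ with $M_1$ the core edges; then $|M_1|=|X_1|$ where $X_1$ is the set of $X$-vertices covered by $M_1$, each $x\in X\setminus X_1$ contributes exactly one leaf edge to $M_2$, and each $y\in Y$ contributes exactly $t_y$ pendant-triangle edges to $M_2$ whether or not $y$ is covered by $M_1$, giving $|M|=|X|+t$ with no exchanges. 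Repair the two counts and either close the maximality-preservation step or switch to this direct count, and the proof goes through.
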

\begin{proof}
It is easy to see that each maximal matching of a star and a star triangle have size 1 and $(n-1)/2$, respectively,
where $n$ denotes the number of vertices of the graph. Assume that $H$ is a Cameron-Walker graph with bipartition
$(X,Y)$. Let $M$ be a maximal matching of $H$ and $M_1$ be those edges of $M$ which are in the underlying bipartite
graph and $M_2=M\sm M_1$. Suppose that $X_1$ and $Y_1$ are vertices in $X$ and $Y$, respectively, incident to some edge
in $M_1$ and $X_2=X\sm X_1$, $Y_2=Y\sm Y_1$. Then for each vertex $x\in X_2$ there should be exactly one leaf edge in
$M_2$ incident to $x$ and for $x\in X_1$, none of the leaf edges at $x$ are in $M_2$. Also for each vertex $y\in Y$ the
pendant triangles at $y$ form a star triangle. If $y\in Y_2$, then a maximal matching of the star triangle at $y$ must
be in $M_2$ and its size as mentioned before equals the number $t_y$ of triangles in the star triangle at $y$. If $y\in
Y_1$, then as $y$ is covered by $M_1$, only those edges of the star triangle can be in $M_2$, that do not meet $y$ and
since $M$ is maximal, all of them must be in $M_2$. Therefore again the star triangle at $y$ contributes exactly $t_y$
edges to $M_2$. Consequently, $|M_2|=|X_2|+t$, where $t$ is the total number of pendant  triangles in $H$. Because
$|M_1|=|X_1|$, it follows that $|M|=|X|+t$ is independent of the choice of $M$.
\end{proof}

As a partial converse we have:
\begin{lem}\label{pure converse}
If $H$ is connected, $\Delta$ is pure and $\grt(H)\geq 5$, then either $H$ is a 5-cycle or a 7-cycle or a star or a
Cameron-Walker graph.
\end{lem}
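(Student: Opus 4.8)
The plan is to argue by contrapositive on the structure of $H$: assuming $H$ is connected with $\grt(H)\geq 5$ and $\Delta$ pure, I want to show $H$ must be one of the four listed graphs. The first key observation is that purity of $\Delta$ forces a strong constraint: every maximal matching of $H$ has the same size, which is $\m(H)$. I would first dispose of the case where $H$ has a cycle. Since $\grt(H)\geq 5$, any cycle in $H$ has length $\geq 5$. If $H$ contains an induced cycle $C$ of length $\ell\geq 5$ and $H\neq C$, then there is a vertex $v$ adjacent to $C$ (by connectedness), and I would construct two maximal matchings of different sizes — one obtained by greedily matching along $C$ optimally, another forced to use a pendant-type edge at $v$ that blocks an optimal cover of $C$ — contradicting purity. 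This should pin down that if $H$ has a cycle, then $H$ itself is a cycle of length $\geq 5$; then a direct parity/greedy computation shows a cycle $C_\ell$ has a pure matching complex exactly when $\ell\in\{5,7\}$ (for even $\ell$ and for $\ell=9,11,\dots$ one finds maximal matchings of size $\lfloor \ell/2\rfloor$ and of strictly smaller size), giving the $5$-cycle and $7$-cycle cases.

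So I may now assume $H$ is a connected forest, i.e. a tree, with $\Delta$ pure; the goal is to show $H$ is a star or a Cameron-Walker tree. Since a tree has no triangles, the Cameron-Walker description specializes: $H$ should be either a star, or obtained from a bipartite graph (a tree) by attaching at least one leaf edge to each vertex of one part $X$ and nothing extra to the other part $Y$ (no pendant triangles, since $\grt(H)\geq 5$ excludes triangles anyway — indeed $\grt \geq 5$ is much stronger than triangle-free here, but for trees it is automatic). The natural route is to invoke Lemma \ref{pure} in reverse spirit: I want to show that for a tree, purity of $\Delta$ already implies $\im(H)=\m(H)$, and then apply the Cameron-Walker classification recalled above, noting that a Cameron-Walker tree has no pendant triangles. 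To get $\im(H)=\m(H)$ from purity: take a maximum matching $M$ with $|M|=\m(H)$; if $\im(H)<\m(H)$, then $M$ is not induced, so there are $e,e'\in M$ joined by an edge $f$; I would then try to build a maximal matching of size $<\m(H)$ by committing to $f$ first and showing the rest of $H$ cannot recover the lost edge — using that $H$ is a tree (acyclicity prevents alternative augmenting structure around $e,e',f$).

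The step I expect to be the main obstacle is precisely this last implication — deducing $\im(H)=\m(H)$, or directly the structural form, from purity of $\Delta$ for a tree. Purity is a global statement about all maximal matchings, and turning a single ``bad'' configuration (an edge $f$ witnessing non-inducedness, or a vertex of $X$-type with no leaf edge, or a vertex not fitting the bipartite-plus-leaves pattern) into a genuinely smaller maximal matching requires a careful local surgery plus an argument that the surgery does not get undone elsewhere in the tree. I would handle this by rooting $H$ and doing induction on the number of vertices: strip a leaf and its neighbor, or a maximal ``broom'' at a vertex, check that purity is inherited by the relevant subtree, and reassemble; the acyclicity of $H$ and the girth hypothesis are exactly what make the local picture around any vertex simple enough (no short cycles to create hidden alternating paths) for this reduction to close. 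Once $\im(H)=\m(H)$ is established, the conclusion is immediate from the Cameron-Walker classification quoted before Lemma \ref{pure}, together with the remark that triangles are forbidden, so no star triangle and no pendant triangles occur, leaving only: star, or Cameron-Walker graph (which, being triangle-free, is a tree built from a bipartite graph by adding leaf edges to one side).
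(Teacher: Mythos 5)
There is a genuine gap, and it occurs at the very first step of your plan. You claim that if $H$ is pure, connected, of girth at least $5$, and contains a cycle $C$, then $H=C$. This is false: the conclusion of the lemma explicitly allows Cameron--Walker graphs, and these can contain cycles. For a concrete counterexample to your claimed dichotomy, take the $6$-cycle with bipartition $(X,Y)$, $|X|=|Y|=3$, and attach one pendant leaf to each vertex of $X$. This graph is connected, has girth $6$, is a Cameron--Walker graph, and by Lemma \ref{pure} its matching complex is pure (every maximal matching has size $|X|+t=3$); yet it properly contains an induced $6$-cycle. Your proposed surgery (``a pendant-type edge at $v$ blocks an optimal cover of $C$'') does not produce a shorter maximal matching here, precisely because the leaf edges always allow the count to be recovered. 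So the reduction to trees fails, and with it the rest of the argument, since the tree-only Cameron--Walker description you then work with is too restrictive: the underlying bipartite graph of a Cameron--Walker graph of girth at least $5$ may have even cycles of length at least $6$.

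Beyond this, the part you yourself flag as ``the main obstacle'' --- deducing from purity that a (tree, or more generally cycle-containing) graph has the leaf-on-every-$X$-vertex structure --- is left as a sketch, and it is in fact the entire mathematical content of the lemma. The paper does not prove this from scratch: it observes that purity of the matching complex says exactly that $H$ is \emph{equimatchable} (all maximal matchings have the same size) and invokes the Frendrup--Hartnell--Vestergaard classification \cite{equimathcable} of connected equimatchable graphs of girth at least $5$, namely $C_5$, $C_7$, an edge, or a bipartite graph with parts $V_1,V_2$ in which every vertex of $V_1$ is adjacent to a leaf and no vertex of $V_2$ is; the proof then merely relabels ($X=V_1$, $Y=V_2$ minus the leaves) to recognize the last case as a star or a Cameron--Walker graph. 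If you want a self-contained argument, you would essentially have to reprove that classification, handling cycle-containing graphs on an equal footing with trees rather than trying to exclude them.
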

\begin{proof}
By \cite[Theorem 1]{equimathcable}, $H$ is either a 5-cycle, a 7-cycle, an edge or a bipartite graph with bipartite
sets $V_1$ and $V_2$ such that each vertex in $V_1$ is adjacent to a leaf and each vertex in $V_2$ is not adjacent to
any leaf. Suppose that $H$ is not a star, a 5-cycle or a 7-cycle. If $L$ is the set of leaves of $H$, then by setting
$X=V_1$ and $Y=V_2\sm L$, then $H$ is a Cameron-Walker graph with bipartition $(X,Y)$. Note that the edges between $X$
and $L$ are the leaf edges at vertices of $X$.
\end{proof}

Recall that for a face $F$ of a simplicial complex $\Gamma$, we define $\link_\Gamma (F)= \{G\sm F|F\se G\in \Gamma\}$.
Also for a vertex $v$ of $\Gamma$, $\Gamma-v$ is the simplicial complex with faces $\{F\in \Gamma|v\notin F\}$. A
vertex $v$ of a nonempty simplicial complex $\Gamma$ is called a \emph{shedding vertex}, when no face of
$\link_\Gamma(v)$ is a facet of $\Gamma-v$.  A nonempty simplicial complex $\Gamma$ is called \emph{vertex
decomposable}, when either it is a simplex or there is a shedding vertex $v$ such that both $\link_\Gamma (v)$ and
$\Gamma- v$ are vertex decomposable. One should note that in some papers $v$ is called a shedding vertex when it is
shedding in the sense defined here and also $\link_\Gamma (v)$ and $\Gamma- v$ are vertex decomposable. The
$(-1)$-dimensional simplicial complex $\{\emptyset\}$ is considered a simplex and hence is vertex decomposable.

\begin{thm}\label{CM main}
Assume that $\im(H)=\m(H)$ and $G=\L(H)$. Then $\Delta(\b G)$ is vertex decomposable.
\end{thm}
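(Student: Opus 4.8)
The plan is to argue by induction on $|\E(H)|$. If $|\E(H)|\le 1$ then $\Delta(\b G)$ is a simplex (the $(-1)$-simplex $\{\tohi\}$ when $\E(H)=\tohi$), hence vertex decomposable. For the inductive step, if $H$ is disconnected, say $H=H_1\sqcup\cdots\sqcup H_r$, then $\L(H)=\L(H_1)\sqcup\cdots\sqcup\L(H_r)$, so $\Delta(\b G)$ is the join $\Delta(\b{\L(H_1)})\ast\cdots\ast\Delta(\b{\L(H_r)})$; since $\im$ and $\m$ are additive over connected components and $\im\le\m$ always, each $H_i$ again satisfies $\im(H_i)=\m(H_i)$ and has fewer edges, so by the inductive hypothesis each $\Delta(\b{\L(H_i)})$ is vertex decomposable, and hence so is their join. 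Thus we may assume $H$ is connected, that is, a star, a star triangle, or a Cameron-Walker graph.

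The common tool for the shedding condition is the elementary fact that if $v\in\V(G)$ and some $w\in N_G(v)$ satisfies $N_G(w)\se N_G[v]$, then $v$ is a shedding vertex of $\Delta(\b G)$: given a facet $M$ of $\Delta(\b G)-v$, that is, a maximal independent set of $G-v$, either $w\in M$, so $M\cap N_G(v)\ne\tohi$, or $w\notin M$ and maximality gives a neighbour $u\in M$ of $w$, whence $u\in N_G(w)\se N_G[v]$ with $u\ne v$, again $M\cap N_G(v)\ne\tohi$; so $M$ is never a face of $\link_{\Delta(\b G)}(v)$. Recall also that for a vertex $e\in\E(H)=\V(G)$ one has $\Delta(\b G)-e=\Delta(\b{\L(H-e)})$ (here $H-e$ deletes the edge $e$) and $\link_{\Delta(\b G)}(e)=\Delta(\b{G\sm N_G[e]})=\Delta(\b{\L(H_{\{e\}})})$ with $H_{\{e\}}=H\sm\V(\{e\})$, and that both $H-e$ and $H_{\{e\}}$ have strictly fewer edges than $H$. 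Hence it suffices, in each of the three classes, to produce a shedding vertex $e$ for which every connected component of $H-e$ and of $H_{\{e\}}$ again satisfies $\im=\m$: then $\Delta(\b G)-e$ and $\link_{\Delta(\b G)}(e)$ are vertex decomposable by induction, and so is $\Delta(\b G)$.

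For a star $H=K_{1,n}$ we have $\L(H)=K_n$, so $\Delta(\b G)$ is $n$ isolated points and is vertex decomposable. For a star triangle with triangles $xa_ib_i$, $1\le i\le k$, take $e$ to be the spoke $xa_1$; then $w=a_1b_1\in N_G(e)$ has $N_G(w)=\{xa_1,xb_1\}\se N_G[e]$, so $e$ is shedding. Here $\link_{\Delta(\b G)}(e)$ is the full simplex on the (pairwise non-adjacent) vertices $a_2b_2,\dots,a_kb_k$, and $H-e$ is a path of length $2$ when $k=1$ and otherwise the Cameron-Walker graph with backbone the single edge $xb_1$, leaf $a_1$ at $b_1\in X$, and pendant triangles $xa_ib_i$ ($i\ge 2$) at $x\in Y$; in each case the components of $H-e$ and of $H_{\{e\}}$ satisfy $\im=\m$.

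Finally, let $H$ be a Cameron-Walker graph with bipartition $(X,Y)$. Pick $x\in X$ and a leaf edge $\ell=xz$ at $x$; since $H$ is not a star its bipartite backbone has an edge and, being connected, has one at $x$, so $x$ has a neighbour $y\in Y$. Set $e=xy$. Then $\ell\in N_G(e)$, while $N_G(\ell)$ is exactly the set of edges of $H$ meeting $x$, which lies inside $N_G[e]$; hence $e$ is shedding. The crux --- and the step I expect to be the main obstacle --- is the purely combinatorial claim that every connected component of $H-e$ and of $H_{\{e\}}$ again has $\im=\m$. The idea is that deleting the backbone edge $e=xy$, or deleting its two endpoints $x$ and $y$, changes only the bipartite backbone, turning it into a disjoint union of smaller connected bipartite graphs (plus possibly some isolated vertices), while every leaf edge stays attached to its $X$-vertex and every pendant triangle stays attached to its $Y$-vertex --- a pendant triangle at $y$ becoming a $K_2$ once $y$ is removed. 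Consequently each component of $H-e$ or $H_{\{e\}}$ is again a star, a star triangle, a Cameron-Walker graph, a $K_2$, or an isolated vertex, so it satisfies $\im=\m$; one only has to keep track of the degenerate cases in which $x$ or $y$ gets isolated in the backbone, or in which $Y$ carries no pendant triangles, all of which are harmless. Granting this, the induction closes and $\Delta(\b G)$ is vertex decomposable.
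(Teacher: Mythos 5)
Your proof is correct and follows essentially the same route as the paper's: induction on $|\E(H)|$ over the star / star triangle / Cameron--Walker classification, with the same choice of shedding edge (a spoke at the centre of a star triangle, resp.\ a backbone edge at a vertex of $X$) and the same use of the leaf edge (resp.\ the opposite triangle edge) to certify sheddingness. The only differences are cosmetic: you treat disconnected $H$ explicitly via joins and package the shedding check as a dominated-neighbour criterion, while the combinatorial claim you flag as the crux (that the components of $H-e$ and $H_{\{e\}}$ stay in the class) is asserted in the paper with no more detail than in your sketch.
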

\begin{proof}
By \cite{CWalker}, $H$ is either a star or a star triangle or a  Cameron-Walker graph. If $H$ is a star then $G$ is a
complete graph and by \cite[Lemma 4]{obs to shell}, $\Delta=\Delta(\b G)$ is vertex decomposable.

For the case that $H$ is a star triangle or a Cameron-Walker graph, we use induction on the number of edges of $H$.
Suppose that $H$ is a star triangle in which the common vertex of the triangles is $v$. Let $e$ be an edge incident to
$v$. Then $\link_\Delta(e)= \Delta(\b{\L(H-\N_G[e])})$ and $H-\N_G[e]$ is just a matching. Therefore $\link_\Delta(e)$
is a simplex. Also $\Delta-e=\Delta(\b{\L(H-e)})$ and $H-e$ is either a star or a Cameron-Walker graph with less edges
that $H$ and hence $\Delta-e$ is vertex decomposable by the induction hypothesis. Also if $M$ is a maximal matching of
$H$ containing $e$, then $(M\sm \{e\})\cup\{e'\}$ is a matching of $H-e$, where $e'$ is the edge in the same triangle
as $e$ but not adjacent to $v$. This shows that $e$ is a shedding vertex of $\Delta$ and thus $\Delta$ is vertex
decomposable.

Now assume that $H$ is a Cameron-Walker graph with bipartition $(X,Y)$. Let $e$ be an edge of the underlying bipartite
graph with an endpoint $x\in X$. By the definition of Cameron-Walker graphs, $x$ is incident to a leaf edge $e'$. Now
every connected component of $H-e$ and $H-N_G[e]$ is either a star or a star triangle or a Cameron-Walker graph with
less edges than $H$. Consequently, both $\Delta-e$ and $\link_\Delta(e)$ are vertex decomposable by induction
hypothesis. Also if $M$ is a maximal matching of $H$ containing $e$, then $(M\sm \{e\})\cup\{e'\}$ is a matching of
$H-e$, that is, $e$ is a shedding vertex of $\Delta$. Thus $\Delta$ is vertex decomposable.
\end{proof}

A combinatorial property related to being CM is shellability. If there is an ordering $F_1, \ldots, F_t$ of all facets
of $\Gamma$ such that for all $i$ we have $\lg F_1, \ldots, F_i \rg \cap F_{i+1}$ is a pure simplicial complex of
dimension $= \dim F_{i+1}-1$, $\Gamma$ is called \emph{shellable}. It is well-known that a vertex decomposable complex
is shellable and a pure shellable complex is CM.
\begin{cor}\label{grt>=5}
Suppose that $H$ is a connected graph with $\grt(H)\geq 5$, $G=\L(H)$ and $\Delta= \Delta(\b G)$ be the matching
complex of $H$. Then the following are equivalent.
\begin{enumerate}
\item \label{grt>=5 1} $\Delta$ is pure vertex decomposable.
\item \label{grt>=5 2} $\Delta$ is pure shellable.
\item  \label{grt>=5 3} $\Delta$ or equivalently $G$ is CM (over some field).
\item  \label{grt>=5 4} $H$ is either a 5-cycle or a star or a Cameron-Walker graph.
\end{enumerate}
\end{cor}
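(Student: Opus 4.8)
The plan is to prove the four conditions equivalent by establishing the cyclic chain $\pref{grt>=5 1}\Rightarrow\pref{grt>=5 2}\Rightarrow\pref{grt>=5 3}\Rightarrow\pref{grt>=5 4}\Rightarrow\pref{grt>=5 1}$. The implications $\pref{grt>=5 1}\Rightarrow\pref{grt>=5 2}$ and $\pref{grt>=5 2}\Rightarrow\pref{grt>=5 3}$ require nothing beyond the standard facts recalled just before the statement: a pure vertex decomposable complex is pure shellable, and a pure shellable complex is CM over every field, in particular over some field. So the substance lies in the last two implications, and both of these rest on structural results already in hand.

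For $\pref{grt>=5 3}\Rightarrow\pref{grt>=5 4}$ I would first note that a complex that is CM over some field is pure, so Lemma \ref{pure converse} applies and tells us that $H$ is a $5$-cycle, a $7$-cycle, a star, or a Cameron-Walker graph. It therefore remains only to exclude $H=C_7$. Here $\L(C_7)=C_7$, so $\Delta=\Delta(\b{C_7})$ is the independence complex of $C_7$, which has dimension $2$ since the maximum matchings of $C_7$ use three edges. A quick face count gives reduced Euler characteristic $-1$; but for a CM complex of even top dimension $d$ Reisner's criterion forces $\widetilde{\chi}=(-1)^d\dim_K\widetilde{H}_d\geq 0$, a contradiction. (Alternatively one may invoke the classical fact that $C_n$ is a CM graph exactly when $n\in\{3,5\}$.) Hence the only graphs surviving are those listed in $\pref{grt>=5 4}$.

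For $\pref{grt>=5 4}\Rightarrow\pref{grt>=5 1}$ there are two cases. If $H$ is a star or a Cameron-Walker graph, then $\im(H)=\m(H)$, so Theorem \ref{CM main} gives that $\Delta$ is vertex decomposable, while Lemma \ref{pure} gives that $\Delta$ is pure; hence $\Delta$ is pure vertex decomposable. If $H=C_5$, then $\L(C_5)=C_5$ and a short direct computation identifies $\Delta=\Delta(\b{C_5})$ with a pentagon, i.e. a $1$-dimensional $5$-cycle; this is pure of dimension $1$ and is vertex decomposable, because for any vertex $v$ the link $\link_\Delta(v)$ is a pair of isolated points and $\Delta-v$ is a path on four vertices (both vertex decomposable), and $v$ is a shedding vertex since no vertex of $\link_\Delta(v)$ is a facet of $\Delta-v$. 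This closes the cycle.

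The step I expect to be the main obstacle is the exclusion of $H=C_7$: it is the one case not covered by Theorem \ref{CM main}, since $\im(C_7)=2\neq 3=\m(C_7)$, so it must be ruled out by hand through the Euler characteristic / Reisner argument above (or by a citation). Everything else reduces to Lemma \ref{pure}, Lemma \ref{pure converse}, Theorem \ref{CM main}, and the small verification that $\Delta(\b{C_5})$ is a vertex decomposable pentagon.
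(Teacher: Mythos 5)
Your proof is correct and follows essentially the same route as the paper: the chain $\pref{grt>=5 1}\Rightarrow\pref{grt>=5 2}\Rightarrow\pref{grt>=5 3}$ by standard facts, $\pref{grt>=5 3}\Rightarrow\pref{grt>=5 4}$ via purity and Lemma \ref{pure converse}, and $\pref{grt>=5 4}\Rightarrow\pref{grt>=5 1}$ via Lemma \ref{pure} and Theorem \ref{CM main}. The only difference is that where the paper cites Woodroofe (\cite[Theorem 10]{obs to shell}) to exclude $C_7$ and to handle $C_5$, you give self-contained verifications (the reduced Euler characteristic $\widetilde{\chi}=-1$ of the $2$-dimensional matching complex of $C_7$ contradicting Reisner's criterion, and an explicit shedding-vertex decomposition of the pentagon), both of which check out.
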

\begin{proof}
It is well-known that \pref{grt>=5 1} \give\ \pref{grt>=5 2} \give\ \pref{grt>=5 3}. Suppose that $\Delta$ is CM over a
field $K$. Then $\Delta$ is pure and hence according to \pref{pure converse}, $H$ is either a 5-cycle or a 7-cycle or a
star or a Cameron-Walker graph. But by \cite[Theorem 10]{obs to shell} a 7-cycle is not CM over any field, so
\pref{grt>=5 4} follows. Now assume that $H$ is a star or a Cameron-Walker graph. Then by \pref{pure} and \pref{CM
main}, $\Delta$ is pure vertex decomposable. For the 5-cycle, the result follows from \cite[Theorem 10]{obs to shell}.
\end{proof}
\subsection{Complete graphs}
At the end of this paper, we consider the matching complex of the complete graph $K_n$ on $n$ vertices. Recall that a
CM simplicial complex $\Gamma$ is \emph{strongly connected} (or \emph{connected in codimension 1}), that is, for each
two facets $A,B$ of $\Gamma$, there is a sequence of facets $A=F_1, F_2, \ldots, F_t=B$, such that $|F_i\cap F_{i+1}|=
|F_i|-1$. We call such a sequence of facets a \emph{strong path} from $A$ to $B$. First we find complete graphs with a
strongly connected matching complex.
\begin{prop}\label{Kn connected}
Suppose that $G=\L(K_n)$ and $\Delta=\Delta(\b G)$. Then $\Delta$ is strongly connected, \ifof $n$ is odd or $n=2$.
\end{prop}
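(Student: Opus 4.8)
My plan is to start from the observation that, because $K_n$ is complete, every maximal matching of $K_n$ is already maximum: two uncovered vertices would be joined by an edge and could be added. Hence $\Delta$ is pure and its facets are exactly the maximum matchings of $K_n$ --- the perfect matchings when $n$ is even, and the near-perfect matchings (covering all but one vertex) when $n$ is odd. When $n=2$ there is a unique facet and strong connectivity is vacuous, so only the cases $n\geq 3$ remain.

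For the forward implication it suffices to prove that if $n\geq 4$ is even then $\Delta$ is not strongly connected. The key point is a rigidity statement: no two distinct facets $A\neq B$ can satisfy $|A\cap B|=|A|-1$. Indeed, in that case $A\setminus B=\{e\}$ and $B\setminus A=\{f\}$ are single edges, the $n/2-1$ common edges cover all but exactly two vertices $p,q$ of $K_n$, and since $A$ and $B$ are perfect matchings both $e$ and $f$ must equal the edge $pq$, forcing $e=f$, a contradiction. As $K_n$ has at least two distinct perfect matchings for $n\geq 4$ (for instance $\{12,34,56,\ldots,(n-1)n\}$ and $\{13,24,56,\ldots,(n-1)n\}$), no strong path connects them.

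For the reverse implication with $n\geq 3$ odd, I would work with the following elementary move between facets, which I will call a \emph{rotation}: if $M$ is a facet uncovering a vertex $v$ and $ab\in M$, then $M'=(M\setminus\{ab\})\cup\{vb\}$ is a facet uncovering $a$ with $|M\cap M'|=|M|-1$, hence an admissible step of a strong path. Fixing the reference facet $S=\{\{2,3\},\{4,5\},\ldots,\{n-1,n\}\}$ (which uncovers vertex $1$), it is enough to connect an arbitrary facet $A$ to $S$ by rotations, and I would do this by induction on the odd integer $n$, the base case $n=3$ being a direct check. For $n\geq 5$, first rotate $A$ along the edge incident to $n$, if necessary, to a facet uncovering $n$; then rotate along the edge $\{n-1,x\}$ incident to $n-1$ to a facet $B$ that contains the edge $\{n-1,n\}$ and uncovers some vertex $x\leq n-2$. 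The remaining edges of $B$ form a near-perfect matching of the complete graph on $\{1,\ldots,n-2\}$; by the induction hypothesis this can be transported to $\{\{2,3\},\ldots,\{n-3,n-2\}\}$ by rotations inside that smaller complete graph, and since those rotations never touch the vertices $n-1$ and $n$ they remain legitimate rotations of $\Delta$ that fix the edge $\{n-1,n\}$. After performing them we have reached exactly $S$, which closes the induction and shows $\Delta$ is strongly connected.

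The main obstacle is the odd case: one has to steer the (unique) uncovered vertex to a convenient place while simultaneously freezing one edge so that the induction can be applied, and the two preliminary rotations above are designed precisely to achieve this. The only routine checks left are that each rotation is genuinely a codimension-one step of $\Delta$ (immediate from the definition) and that the rotations carried out ``inside'' $K_{n-2}$ survive unchanged once $\{n-1,n\}$ is frozen. By contrast, the even case is immediate once the $e=f$ rigidity is noticed.
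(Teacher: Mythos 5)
Your proof is correct. The even case is the same rigidity argument as the paper's: the $n/2-1$ common edges leave exactly two vertices uncovered, forcing both matchings to close up with the same edge. For the odd case you and the paper both induct on $n$ by freezing an edge and recursing into $K_{n-2}$, but you organize it differently: you connect every facet to a fixed reference facet $S$ by explicit single-edge ``rotations'' that steer the uncovered vertex, whereas the paper connects an arbitrary pair $M_1,M_2$ directly and therefore needs a case analysis (shared edge or not; the vertex $c$ saturated by $M_1$ or not) together with an ad hoc three-step path using the unsaturated vertex. Your normalization to a canonical facet buys a cleaner argument with only one type of move and no case split; the paper's version is slightly more self-contained in that it never has to argue that the rotations performed inside $K_{n-2}$ remain codimension-one steps after the edge $\{n-1,n\}$ is re-attached (a point you correctly note is immediate). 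Both are complete proofs.
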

\begin{proof}
Suppose that $n$ is even. Then each maximal matching of $K_n$ has size $n/2$. If $M$ and $M'$ are maximal matchings of
$K_n$ and $|M\cap M'|=|M|-1$, then $M \cap M'$ saturates  $n-2$ vertices of $K_n$. If $u$ and $v$ are the only
unsaturated vertices, then the only maximal matching containing $M\cap M'$ is $(M\cap M') \cup \{uv\}$, so $M=M'$.
Consequently, if $K_n$ has more than one maximal matchings, which holds if $n>2$, then $\Delta$ is not strongly
connected.

Now assume that $n$ is odd. We use induction to show that there is a strong path between every two maximal matchings of
$K_n$. Let $M_1$ and $M_2$ be maximal matchings of $K_n$. If $uv\in M_1 \cap M_2$ then there is a strong path between
$M_1\sm \{uv\}$ and $M_2\sm \{uv\}$ in $K_n-u-v \cong K_{n-2}$, by the induction hypothesis. Hence there is a strong
path between $M_1$ and $M_2$. Suppose that $M_1\cap M_2 =\tohi$ and $a$ is a vertex saturated by both $M_1$ and $M_2$,
say $ab\in M_1$ and $ac\in M_2$. If $c$ is not saturated by $M_1$, then by the previous part there is a strong path
between $(M_1\sm\{ab\}) \cup\{ac\}$ and $M_2$. Thus it follows that there is such a path between $M_1$ and $M_2$. If
$c$ is saturated by $M_1$, say $cd\in M_1$, then there is a strong path $P$ from $(M_1\sm\{ab,cd\}) \cup\{ac,bd\}$ to
$M_2$. Note that since $n$ is odd, there is an vertex $e$ unsaturated in $M_1$. If we attach the following strong path
at the start of $P$ we get a strong path from $M_1$ to $M_2$: $M_1$, $(M_1\sm \{ab\}) \cup\{ae\}$, $(M_1\sm \{ab,cd\})
\cup \{ae, bd\}$.
\end{proof}
Recall that a \emph{perfect matching} of $H$ is a matching that saturates all vertices of $H$.
\begin{rem}\label{per match}
The first part of proof of \pref{Kn connected}, indeed shows that if $H$ is a graph with a perfect matching, then its
matching complex cannot be strongly connected and hence is not CM.
\end{rem}

To find out when $\Delta(\b{\L(K_n)})$ is CM, we need an algebraic tool. Let $\Gamma$ be a simplicial complex and
denote by $\tl{C}_d(\Gamma)= \tl{C}_d(\Gamma; K)$ the free $K$-module whose basis is the set of all $d$-dimensional
faces of $\Gamma$. Consider the $K$-map $\rnd_d: \tl{C}_d(\Gamma) \to \tl{C}_{d-1}(\Gamma)$ defined by
 $$\rnd_d(\{v_0, \ldots, v_d\}) =\sum_{i=0}^d (-1)^i \{v_0, \ldots, v_{i-1},v_{i+1},\ldots, v_d\},$$
where $v_0<\cdots <v_d$ is a linear order. Then $(\tl{C}_\blt, \rnd_\blt)$ is a complex of free $K$-modules and
$K$-homomorphisms called the \emph{augmented oriented chain complex} of $\Gamma$ over $K$. We denote the $i$-th
homology of this complex by $\tl{H}_i(\Gamma; K)$. The Reisner theorem states that  $\Gamma$ is CM over $K$, \ifof  for
all faces $F$ of $\Gamma$ including the empty face and for all $i< \dim \link_\Gamma(F)$, one has
$\tl{H}_i(\link_\Gamma(F); K)=0$ (see \cite[Theorem 8.1.6]{hibi}).  Equivalently, this theorem states that $\Gamma$ is
CM over $K$ \ifof for all vertices $v$ of $\Gamma$, $\link_\Gamma(v)$ is CM and $\tl{H}_i(\Gamma;K)=0$ for all $i<\dim
\Gamma$.

\begin{thm}
Suppose that $G=\L(K_n)$ and $\Delta=\Delta(\b G)$ is the matching complex of $K_n$. Then $\Delta$ is CM over a field
$K$, \ifof either
\begin{enumerate}
 \item $n \leq 3$ or $n=5$;
 \item or $n=7$ and $\chr K\neq 3$.
\end{enumerate}
\end{thm}
\begin{proof}
If $n\leq 3$, then $\dim \Delta \leq 0$ and $\Delta$ is CM. If $n>2$ is even, then by \pref{Kn connected} $\Delta$ is
not strongly connected and hence not CM. If $n=5$, then $\dim \Delta=1$ and $\Delta$ is CM \ifof it is connected or
equivalently strongly connected (see \cite[Excercise 5.1.26]{CM ring}). Hence again the result follows from \pref{Kn
connected}. Suppose that $n\geq 7$ is odd. Assume that $e$ is a vertex of $\Delta$, that is, $e=uv$, for vertices $u$
and $v$ of $K_n$. Then $\link_\Delta(e)$ is the matching complex of $K_n-u-v\cong K_{n-2}$. Thus if we show that for
$n=9$, $\Delta$ is not CM over any field and for $n=7$, $\Delta$ is CM exactly when $K$ has characteristic $\neq 3$,
then the proof is concluded. But letting $n=7$, for all vertices $e$ of $\Delta$, $\link_\Delta(e)$ is CM. Hence
$\Delta$ is CM \ifof $\tl{H}_i(\Delta;K)=0$ for all $i<\dim \Delta=2$. By  Table 6.1 of \cite{Match Rev}, we have
$\tl{H}_1(\Delta;\z)=\z_3$ and $\tl{H}_0(\Delta; \z)=0$ and hence by the universal coefficient theorem,
$\tl{H}_1(\Delta;K)=\z_3\otimes K$ is zero \ifof $\chr K\neq 3$. Similar argument using Table 6.1 of \cite{Match Rev}
shows that if $n=9$, then $\tl{H}_2(\Delta, K)\neq 0$ for every field $K$ and $\Delta$ is not CM over any field.
\end{proof}

\paragraph{Acknowledgement}
The author would like to thank M. Bayer and B. Goeckner for mentioning some interesting references that the author was
not aware of and contained  answers to some questions posed in a previous version of this paper.

                            
\end{document}